\definecolor{cb-yellow}{RGB}{221,170,51}
\definecolor{cb-red} {RGB}{187,85,102}
\definecolor{cb-green}{RGB}{17,119,51}
\newtheorem{theorem}{Theorem}
\newtheorem*{theorem*}{Theorem}
\newtheorem{corollary}[theorem]{Corollary}
\newtheorem{problem}[theorem]{Problem}
\theoremstyle{definition}
\newtheorem{definition}[theorem]{Definition}
\newtheorem{example}[theorem]{Example}
\newtheorem{defprop}[theorem]{Definition/Proposition}
\newcommand{\RR}{\mathbb{R}}
\newcommand{\QQ}{\mathbb{Q}}
\newcommand{\ZZ}{\mathbb{Z}}
\newcommand{\NN}{\mathbb{N}}
\newcommand{\PP}{\mathbb{P}}
\newcommand{\CC}{\mathbb{C}}
\newcommand{\puiseux}[1]{{#1}\{\!\{t\}\!\}}
\newcommand{\cH}{\mathcal{H}}
\newcommand{\cL}{\mathcal{L}}
\newcommand{\M}{\mathcal{M}}
\newcommand{\cN}{\mathcal{N}}
\newcommand{\V}{\mathcal{V}}
\DeclareMathOperator{\conv}{conv}
\DeclareMathOperator{\mldeg}{MLdeg}
\DeclareMathOperator{\im}{im}
\newcolumntype{P}[1]{>{\centering\arraybackslash}p{#1}}
\date{}
\title{\textbf{The Maximum Likelihood Degree of \\ Toric Models is Monotonic}}
\author{Carlos Am\'endola, Janike Oldekop and Maximilian Wiesmann}
\begin{document}
\maketitle

\begin{abstract}
    We settle a conjecture by Coons and Sullivant stating that the maximum likelihood (ML) degree of a facial submodel of a toric model is at most the ML degree of the model itself. We discuss the impact on the ML degree from observing zeros in the data. Moreover, we connect this problem to tropical likelihood degenerations, and show how the results can be applied to discrete graphical and quasi-independence models.
\end{abstract}

\section{Introduction}
\label{sec:intro}

In algebraic statistics \cite{AlgebraicStatistics}, a statistical model is represented by an algebraic variety. The class of \emph{log-affine models}, also known as discrete regular exponential families, corresponds to \emph{scaled projective toric varieties} $X_{A,c}$ \cite{amendola2019maximum}. Here, $A$ encodes a polytope associated to the toric variety and $c$ are scaling parameters. A key notion of likelihood geometry \cite{LikelihoodGeometry} is the \emph{maximum likelihood (ML) degree} \cite{catanese2006maximum}. It is an algebraic invariant of the variety representing a statistical model capturing the algebraic complexity of ML estimation. \par 

The ML degree makes an important reappearance in physics in the context of \emph{scattering amplitudes}. Here, the ML degree counts the number of solutions to the scattering equations \cite{sturmfels2021likelihood}. Most recently, it was shown that the ML degree of a scaled toric variety determines the number of summands appearing in the \emph{toric amplitude} \cite{telen2025toric}.

The ML degree of a scaled toric variety $X_{A,c}$ depends both on the polytope $\conv(A)$ and the scalings $c$. For generic $c$, the ML degree simply equals the normalized volume of $\conv(A)$, which is the degree of $X_{A,c}$. However, the ML degree can \emph{drop}. This is precisely the case if $c$ lies on the principal $A$-determinant \cite{DiscriminantsResultantsAndMultidimensionalDeterminants, amendola2019maximum}. The complete dependence of the ML degree on the scaling is captured by an Euler stratification \cite{EulerStrati}. \par 

Further research in this direction studies the ML degree of \emph{facial submodels}. Given a face~$F$ of the polytope $\conv(A)$, the facial submodel for $F$ of $X_{A,c}$ corresponds to the scaled toric variety $X_{A_F,c_F}$, where the subscript denotes restriction to the lattice points contained in the face $F$. Often, if a combinatorial structure (e.g.\ a graph) underlies a statistical model, the model described by a substructure is then a facial submodel. This is the case, for example, for discrete graphical models \cite{geiger2006toric} or quasi-independence models \cite{QuasiIndependenceModelsWithRationalMaximumLikelihoodEstimator}. It is proved in \cite[Theorem 3.2]{QuasiIndependenceModelsWithRationalMaximumLikelihoodEstimator} that if a toric variety has ML degree one, then all facial submodels need to have ML degree one. Moreover, the authors conjecture that the ML degree of a facial submodel cannot increase, i.e.\ the ML degree is \emph{monotonic} with respect to the face poset \cite[Conjecture 3.5]{QuasiIndependenceModelsWithRationalMaximumLikelihoodEstimator}. The main goal of the present article is to prove this statement.

\begin{theorem*}[Monotonicity of toric ML degree]
    Let $X_{A,c}$ be a scaled toric variety and let $F$ be a face of $\conv(A)$. Then the ML degree is monotonic on the facial submodels $X_{A_F,c_F}$ of $X_{A,c}$, that is,
    \[
        \mldeg(X_{A_F,c_F}) \leq \mldeg(X_{A,c}).
    \]
\end{theorem*}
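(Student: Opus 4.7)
My plan is to degenerate the data (rather than the model) and to use conservation of number of critical points in a projective compactification. Starting from generic data $u$ on the full index set, I would let the coordinates $u_i$ with $i \notin F$ tend to zero so that the data concentrates on the face $F$, and then show that at least $\mldeg(X_{A_F, c_F})$ of the $\mldeg(X_{A, c})$ critical points of the full log-likelihood migrate to the facial stratum $X_{A_F, c_F}$ in the toric boundary.

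Concretely, fix generic $u_F \in \CC^{|F|}$ and a generic completion $u^\circ \in \CC^{|A \setminus F|}$, and consider the one-parameter family $u(t) = (u_F,\, t \cdot u^\circ)$ for $t \in \CC$. At $t = 1$, the data is generic on all of $A$, so the log-likelihood $\ell_{u(1)}$ has exactly $\mldeg(X_{A, c})$ isolated critical points on $X_{A,c}$. Following these critical points as $t$ varies, via Puiseux expansions about $t = 0$ inside the projective toric closure $\overline{X_{A, c}} \subset \PP^n$, each of them has a well-defined limit as $t \to 0$, either in the open torus orbit $X_{A, c}$ or on one of the toric boundary strata. The limiting points that land on $X_{A_F, c_F}$ are, by direct restriction of the log-likelihood (the terms $u_i \log p_i$ with $i \notin F$ disappear as $t \to 0$), critical points of the facial log-likelihood with data $u_F$. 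For generic $u_F$ this facial problem has exactly $\mldeg(X_{A_F, c_F})$ critical points; conservation of number (upper semi-continuity of the length of a zero-dimensional subscheme in a proper flat family) then yields $\mldeg(X_{A_F, c_F}) \leq \mldeg(X_{A, c})$.

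The main obstacle is making the ``each facial critical point is attained'' step rigorous: one must show that every critical point of the facial log-likelihood on $X_{A_F, c_F}$ is in fact the limit of at least one of the $\mldeg(X_{A, c})$ branches, rather than being missed by branches that accumulate on other faces of $\conv(A)$ or form non-reduced clusters at the boundary. This is essentially the content of the tropical likelihood degeneration mentioned in the abstract. A natural route is to express $\mldeg(X_{A, c})$ as a Bernstein--Kushnirenko mixed volume of Newton polytopes associated to the likelihood ideal and to decompose this mixed volume along the faces of $\conv(A)$, with the summand indexed by $F$ recovering $\mldeg(X_{A_F, c_F})$. Alternatively, one can perform a toric blow-up along $X_{A_F, c_F}$ so that the degenerating family $\ell_{u(t)}$ becomes proper with the facial stratum appearing as an isolated limit component; then the $\mldeg(X_{A_F, c_F})$ facial critical points are visibly separated in the special fiber, and the desired surjectivity of the branch-to-critical-point map follows.
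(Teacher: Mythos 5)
Your degeneration $u(t)=(u_F,\,t\,u^\circ)$ is exactly the specialization the paper uses (its proof sets the data to zero off the face), but you run the argument in the harder direction, and the obstacle you flag yourself --- that some of the $\mldeg(X_{A,c})$ branches might escape to other strata or cluster non-reducedly, so that facial critical points could be missed --- is a genuine gap that your proposed fixes do not close. The mixed-volume route cannot work in the cases that matter: for non-generic scalings $c$ (the only interesting case, since for generic $c$ monotonicity already follows from Stanley's monotonicity of $h^*$-vectors), $\mldeg(X_{A,c})$ is strictly smaller than the Bernstein--Kushnirenko count of the likelihood system, so there is no mixed-volume decomposition indexed by faces to appeal to. The toric blow-up suggestion is not carried out and would still require knowing that the special fiber is reduced along the facial stratum, which is the same difficulty in different clothing.

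The paper closes the gap by reversing the direction of the argument. After an induction reducing to the case of a facet and a unimodular change putting $F$ into the hyperplane where the last exponent vanishes, one checks directly that every critical point $\hat\theta_F$ of the facial likelihood system lifts to a solution $(\hat\theta_F,0)$ of the full likelihood system at the specialized data $\tilde u=(u_F,0,\dots,0)$, and then --- this is the real technical content, absent from your proposal --- that this lifted solution is \emph{isolated}. Isolatedness is proved by splitting the last equation $\theta_0\theta_d\partial_{\theta_d}f=0$ into the components cut out by $\theta_d^{\alpha+1}$ and by $\theta_0\theta_d g$, and exhibiting a block-triangular Jacobian whose diagonal blocks are nonsingular for generic $u_F$. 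Once isolatedness is known, the Parameter Continuation Theorem does all the remaining work: its part (ii) says the number of isolated solutions at \emph{any} parameter value is at most the generic count, so $\mldeg(X_{A_F,c_F})\le\#\V(\cL_A(\theta;\tilde u))\le\mldeg(X_{A,c})$; equivalently, its part (v) guarantees that every isolated solution of the limit system is reached by some branch, which is precisely the surjectivity you were missing. If you supply the isolatedness computation, your outline becomes the paper's proof.
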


For (non-toric) Gaussian graphical models, an analogous statement, monotonicity of the ML degree with respect to subgraphs, has been recently shown in \cite[Corollary 2.2]{amendola2024maximum}. \par 

\begin{example}
    \label{ex:2-dilated-cube}
    Consider the three-dimensional cube with side-length two, located in the positive orthant with one vertex being the origin, see Figure \ref{fig:polytopes} (a). Let $A$ be the matrix whose columns are the lattice points, and let $c$ be the all-ones vector of length~27. The variety $X_{A,c}$ is a Segre--Veronese embedding of $\PP^1\times \PP^1\times \PP^1 \subset \PP^{26}$. In statistical terms, it represents the independence model of three random variables $X_1,\, X_2$ and $X_3$, where each $X_i$ has three possible states with probabilities proportional to $(\theta_{i,1}^2, \theta_{i,1}\theta_{i,2}, \theta_{i,2}^2)$. For a generic scaling, the ML degree equals the degree of $X_{A,c}$, i.e.\ the normalized volume of the cube which is 48. However, for the all-ones scaling $c$, the ML degree of $X_{A,c}$ drops down to eight \cite[Theorem 5.3]{LikelihoodGeometryOfReflexivePolytopes}. Let $F$ be any of the six facets, let $A_F$ be the matrix with columns the lattice points of $F$, and let $c_F$ be the all-ones vector of length nine. Then we have $\mldeg(X_{A_F,c_F}) = 4 \leq 8$.
\end{example}

The paper is organized as follows. In \Cref{sec:preliminaries} we introduce notation and the relevant background on maximum likelihood estimation of toric varieties. \Cref{sec:monotonicity} is devoted to the proof of the main theorem stated above. The proof relies on choosing data with zero entries. Further implications of such data zeros are discussed in \Cref{sec:data_zeros}. A different approach to understanding the main result is taken in \Cref{sec:tropical}, where we analyze the ML degree monotonicity using tropical likelihood degenerations. In the final \Cref{sec:applications} we discuss applications of the main theorem to discrete graphical and quasi-independence models.

\begin{figure}[H]
\begin{minipage}[t]{0.49\textwidth}
(a)
\end{minipage}
\begin{minipage}[t]{0.49\textwidth}
(b)
\end{minipage}
\begin{minipage}[t]{0.49\textwidth}
\centering
\begin{tikzpicture}[scale=1.85,every node/.style={minimum size=1cm},on grid]
    \draw [black!40] (0,0) to (2,0);
    \draw [black!40] (0,0) to (0,2) to (1,2) to (1,0);
    \draw [black!40] (0.25,0.25) to (2.25,0.25);
    \draw [black!40] (0.25,0.25) to (0.25,2.25) to (1.25,2.25) to (1.25,0.25);
    \draw [black!40] (0,0) to (0.25,0.25);
    \draw [black!40] (1,0) to (1.25,0.25);
    \draw [black!40] (0,1) to (0.25,1.25) to (1.25,1.25) to (1,1) to (0,1);
    \draw [black!40] (0,2) to (0.25,2.25) to (1.25,2.25) to (1,2) to (0,2);
    \draw [black!40] (1,1) to (2,1) to (2.25,1.25) to (1.25,1.25);
    \draw [black!40] (1,2) to (2,2) to (2.25,2.25) to (1.25,2.25);
    \draw [black!40] (2,0) to (2.25,0.25) to (2.25,2.25);
    \draw [black!40] (2,0) to (2,2);
    \draw [black!40] (0.25,0.25) to (0.5,0.5) to (2.5,0.5) to (2.25,0.25);
    \draw [black!40] (1,0) to (1.5,0.5) to (1.5,2.5) to (1.25,2.25); 
    \draw [black!40] (1.25,1.25) to (1.5,1.5);
    \draw [black!40] (2.5,0.5) to (2.5,2.5) to (0.5,2.5) to (0.5,0.5);
    \draw [black!40] (0.25,1.25) to (0.5,1.5) to (2.5,1.5) to (2.25,1.25);
    \draw [black!40] (0.25,2.25) to (0.5,2.5);
    \draw [black!40] (2.25,2.25) to (2.5,2.5);
    
    \draw[black!80,fill=violet,opacity=0.5] (0,0) -- (2,0) -- (2.5,0.5) -- (0.5,0.5) -- (0,0); 
    \draw[black!80,fill=blue,opacity=0.45] (0,0) -- (0.5,0.5) -- (0.5,2.5) -- (0,2) -- (0,0); 
    \draw[black!80,fill=blue,opacity=0.45] (0.5,0.5) -- (0.5,2.5) -- (2.5,2.5) -- (2.5,0.5) -- (0.5,0.5); 
    
    \node [] at (0,-0.1) {\footnotesize1}; 
    \node [] at (1,-0.1) {\footnotesize1}; 
    \node [] at (2,-0.1) {\footnotesize1}; 
    \node [] at (0.19,0.31) {\footnotesize1}; 
    \node [] at (1.19,0.31) {\footnotesize1}; 
    \node [] at (2.3,0.31) {\footnotesize1}; 
    \node [] at (2.55,0.57) {\footnotesize1}; 
    \node [] at (1.55,0.57) {\footnotesize1}; 
    \node [] at (0.55,0.57) {\footnotesize1}; 
    \node [] at (0,1.1) {\footnotesize1};
    \node [] at (1,1.1) {\footnotesize1}; 
    \node [] at (2,1.1) {\footnotesize1}; 
    \node [] at (0.19,1.31) {\footnotesize1}; 
    \node [] at (1.19,1.31) {\footnotesize1}; 
    \node [] at (2.3,1.31) {\footnotesize1}; 
    \node [] at (2.55,1.57) {\footnotesize1}; 
    \node [] at (1.55,1.57) {\footnotesize1}; 
    \node [] at (0.55,1.57) {\footnotesize1}; 
    \node [] at (0,2.1) {\footnotesize1};
    \node [] at (1,2.1) {\footnotesize1}; 
    \node [] at (2,2.1) {\footnotesize1}; 
    \node [] at (0.19,2.31) {\footnotesize1}; 
    \node [] at (1.19,2.31) {\footnotesize1}; 
    \node [] at (2.3,2.31) {\footnotesize1}; 
    \node [] at (2.55,2.57) {\footnotesize1}; 
    \node [] at (1.55,2.57) {\footnotesize1}; 
    \node [] at (0.55,2.57) {\footnotesize1}; 
\end{tikzpicture}
\end{minipage}
\begin{minipage}[t]{0.49\textwidth}
\centering
\begin{tikzpicture}[scale=1.85,every node/.style={minimum size=1cm},on grid]
    \draw [black!30] (0,0) to (2,0);
    \draw [black!30] (0,0) to (0,2) to (1,2) to (1,0);
    \draw [black!30] (0.25,0.25) to (2.25,0.25);
    \draw [black!30] (0.25,0.25) to (0.25,2.25) to (1.25,2.25) to (1.25,0.25);
    \draw [black!30] (0,0) to (0.25,0.25);
    \draw [black!30] (1,0) to (1.25,0.25);
    \draw [black!30] (0,1) to (0.25,1.25) to (1.25,1.25) to (1,1) to (0,1);
    \draw [black!30] (0,2) to (0.25,2.25) to (1.25,2.25) to (1,2) to (0,2);
    \draw [black!30] (1,1) to (2,1) to (2.25,1.25) to (1.25,1.25);
    \draw [black!30] (1,2) to (2,2) to (2.25,2.25) to (1.25,2.25);
    \draw [black!30] (2,0) to (2.25,0.25) to (2.25,2.25);
    \draw [black!30] (2,0) to (2,2);
    \draw [black!30] (0.25,0.25) to (0.5,0.5) to (2.5,0.5) to (2.25,0.25);
    \draw [black!30] (1,0) to (1.5,0.5) to (1.5,2.5) to (1.25,2.25); 
    \draw [black!30] (1.25,1.25) to (1.5,1.5);
    \draw [black!30] (2.5,0.5) to (2.5,2.5) to (0.5,2.5) to (0.5,0.5);
    \draw [black!30] (0.25,1.25) to (0.5,1.5) to (2.5,1.5) to (2.25,1.25);
    \draw [black!30] (0.25,2.25) to (0.5,2.5);
    \draw [black!30] (2.25,2.25) to (2.5,2.5);
    
    \draw[black,fill=violet,opacity=0.5] (0,0) -- (2,0) -- (2.25,0.25) -- (1.5,0.5) -- (0.5,0.5) -- (0,0); 
    \draw[black,fill=blue,opacity=0.45] (0,0) -- (0,1) -- (0.5,0.5) -- (0,0); 
    \draw[black] (0,0) -- (0,1) -- (2,0) -- (0,0); 
    \draw[black] (2.5,0.5) -- (0,1) -- (2,0) -- (2.5,0.5); 
    \draw[black,fill=blue,opacity=0.45] (2.5,0.5) -- (0,1) -- (0.5,0.5) -- (2.5,0.5); 
    \draw[black,fill=violet,opacity=0.6] (2.25,0.25) -- (1.5,0.5) -- (2.5,0.5) -- (2.25,0.25); 
    
    \node [] at (0,-0.1) {\footnotesize1}; 
    \node [] at (1,-0.1) {\footnotesize2}; 
    \node [] at (2,-0.1) {\footnotesize1}; 
    \node [] at (0.19,0.31) {\footnotesize2}; 
    \node [] at (1.19,0.31) {\footnotesize4}; 
    \node [] at (2.25,0.31) {\footnotesize2}; 
    \node [] at (2.55,0.57) {\footnotesize1}; 
    \node [] at (1.55,0.57) {\footnotesize2}; 
    \node [] at (0.55,0.57) {\footnotesize1}; 
    \node [] at (0,1.1) {\footnotesize1};
\end{tikzpicture}
\end{minipage}

\caption{Examples of three-dimensional polytopes. In (a), the cube with side-length two is shown, while (b) illustrates the pyramid over the twice dilated unit square. Some faces or subpolytopes are highlighted by different colors. Each lattice point is labeled with a scaling.} \label{fig:polytopes}
\end{figure}

\section{ML Estimation of Log-Affine Models}
\label{sec:preliminaries}

Let $A\in \ZZ^{(d+1)\times n}$ be an integer matrix, where we fix the first row to be all-ones. Moreover, let $a_i\in\ZZ^d$ denote the $i^{\text{th}}$ column of $A$ with the leading one removed, i.e.\ $A$ is of the form
\[
    A = \begin{pmatrix}
        1 & 1 & \dots & 1\\
        a_1 & a_2 & \dots & a_n\\
    \end{pmatrix}.
\]
Let $c \in (\CC^*)^n$ be a vector of scalings. The data $(A,c)$ gives rise to a monomial map
\[
    \psi_{A,c} \colon (\CC^*)^{d+1} \rightarrow \PP^{n-1},\quad (\theta_0,\theta_1,\dots,\theta_d) \mapsto (c_1\theta_0\theta^{a_1} : \dots : c_n\theta_0\theta^{a_n}),
\]
where the notation $\theta^{a_j}$ is short for $\prod_{1\leq i\leq d}\theta_i^{a_{ij}}$. The \emph{scaled projective toric variety} $X_{A,c}$ is the Zariski closure of the image of $\psi_{A,c}$, i.e.\ $X_{A,c} = \overline{\im(\psi_{A,c})} \subseteq \PP^{n-1}$. We assume the matrix $A$ to be of full rank, the affine integer lattice generated by the columns to be equal to $\ZZ^{d+1}$, and the fibres of $\psi_{A,c}$ to be precisely $\CC^*$ (corresponding to the $s$ variable). 
% If $c=(1,\dots,1)$ is the all-ones vector we might drop $c$ from notation and simply write $X_{A,(1,\dots,1)} = X_A$.
A \emph{discrete statistical model} with $n$ outcomes is a subset $\M$ 
of the open \emph{probability simplex}
\[
    \Delta_{n-1}^{\circ}=\left\{(p_1,\ldots, p_n) ~:~ \sum_{i=1}^n p_i=1,~ p_i> 0 \textup{ for all } i=1,\dots,n\right\}.
\]%
A broad class of statistical models are \emph{log-affine models}, also called \emph{discrete regular exponential families}. These models are precisely the ones that arise from scaled toric varieties.

\begin{defprop}[{\cite[\S 6.2]{AlgebraicStatistics}}]
    A log-affine model $\M_{A,c}$ can be represented as the intersection of a scaled toric variety with the open probability simplex. More precisely, let $\varphi\colon\PP^{n-1}\rightarrow \CC^n$ be the map defined by $(p_1:\dots:p_n)\mapsto \frac{1}{p_1+\dots +p_n}(p_1,\dots,p_n)$. Then
    \begin{equation*}
        \label{eq:log-affine-model-as-toric-intersection}
        \M_{A,c} = \varphi(X_{A,c}) \cap \Delta^\circ_{n-1}.
    \end{equation*}
\end{defprop}

Given a data vector $u\in \RR^n_{>0}$ (usually containing the outcome counts of a random variable) and a model $\M$, a common task in statistics is to find the probability distribution $\hat{p}\in\M$ that best explains the data $u$. To this end, one tries maximizing the \emph{log-likelihood function}
\begin{equation}
    \label{eq:likelihoodFunction}
    \ell_u(p)= \left(\sum_{i=1}^n u_i\log(p_i)\right) - u_+\log(p_+).
\end{equation}
Here, we use the notation $u_+ = u_1+\dots+u_n$ and $p_+ = p_1+\dots+p_n$. A maximizer $\hat{p}$ of \eqref{eq:likelihoodFunction} on the model $\M$ is called \emph{maximum likelihood estimate (MLE)} of the model for data $u$. For log-affine models $\M_{A,c}$, there is a unique such maximizer, which is obtained as the intersection of a linear space with $X_{A,c}$. This is the content of \emph{Birch's Theorem}.

\begin{theorem}[Birch's Theorem, {\cite[Proposition 2.1.5]{LecturesOnAlgebraicStatistics}}]
    \label{thm:Birch}
    Let $u\in\NN^n_{>0}$ be a positive data vector and let $\M_{A,c}$ be a log-affine model. Then the MLE \, $\hat{p} \in \M_{A,c}$ for data $u$ is the unique nonnegative solution to the system of equations 
    \begin{equation}
        \label{eq:birch}
        A\hat{p} = (u_+)^{-1}Au \quad\text{and}\quad \hat{p}\in \varphi(X_{A,c}).
    \end{equation}
\end{theorem}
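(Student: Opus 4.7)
The plan is to follow the classical exponential family argument: derive the first-order conditions for $\ell_u$ on the parametrized model, use boundary behavior to establish existence of an interior maximizer, and then prove uniqueness via strict concavity in canonical coordinates.

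First, I would compose $\ell_u$ with the monomial parametrization $\psi_{A,c}$ to obtain a smooth function on $(\CC^*)^{d+1}$ and compute its critical equations. Writing $p_i = c_i\theta_0\theta^{a_i}$ and applying the chain rule, differentiation with respect to $\log\theta_k$ yields
\[
    \sum_{i=1}^n u_i a_{ki} \;=\; \frac{u_+}{p_+}\sum_{i=1}^n a_{ki}\, p_i
\]
for $1\leq k\leq d$, which together with the trivial identity from $\log\theta_0$ (matching the top row of ones) is exactly $Ap = (p_+/u_+)\,Au$. Normalizing to the simplex ($p_+ = 1$) yields the Birch equation $A\hat p = u_+^{-1}\,Au$, so every interior critical point of $\ell_u$ on $\M_{A,c}$ solves~\eqref{eq:birch}.

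Next I would establish existence. Because every $u_i>0$, the continuous extension of $\ell_u$ to the closed simplex takes the value $-\infty$ on the coordinate boundary, so the restriction of $\ell_u$ to the compact set $\overline{\varphi(X_{A,c})\cap\Delta_{n-1}^{\circ}}$ attains its maximum in the relative interior. Such a maximizer is a critical point of the parametrized likelihood and hence a nonnegative solution to~\eqref{eq:birch}, giving at least one solution that coincides with the MLE $\hat p$.

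The main obstacle is uniqueness, which is the genuine content of the theorem. I would pass to the canonical exponential family coordinates $\eta_k = \log\theta_k$ and write
\[
    \ell_u(\eta) \;=\; \langle Au,\eta\rangle \;-\; u_+\log\!\Bigl(\sum_{i=1}^n c_i\, e^{\langle (1,a_i),\eta\rangle}\Bigr) + \text{const}.
\]
The Hessian of this expression is $-u_+$ times the covariance matrix of the columns of $A$ under the probability distribution $p/p_+$, and the full-rank assumption on $A$ makes this covariance positive definite. Hence $\ell_u(\eta)$ is strictly concave and admits at most one critical point, which rules out a second interior solution of~\eqref{eq:birch}. A hypothetical nonnegative boundary solution with some $p_i=0$ would lie on a proper facial subvariety and push $Ap/p_+$ into a proper face of $\conv(A)$, contradicting the fact that $Au/u_+$ lies in the \emph{interior} of $\conv(A)$ since $u$ is strictly positive. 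Together, these arguments pin down $\hat p$ as the unique nonnegative solution. Equivalently, I could invoke the classical fact that the map $p\mapsto Ap/p_+$ is a homeomorphism from the positive real part of $X_{A,c}\cap\Delta_{n-1}^{\circ}$ onto the interior of $\conv(A)$, which delivers uniqueness in one stroke.
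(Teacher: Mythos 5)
The paper does not prove this statement; it is quoted verbatim from the cited reference, and your argument is essentially the standard proof given there (first-order conditions in the monomial parametrization, existence from the boundary blow-up of $\ell_u$ when $u>0$, uniqueness from concavity in the canonical coordinates, or equivalently from the moment-map homeomorphism onto the interior of $\conv(A)$). One imprecision worth fixing: the Hessian of $\eta\mapsto \log\bigl(\sum_i c_i e^{\langle(1,a_i),\eta\rangle}\bigr)$ is the covariance matrix of the full columns $(1,a_i)$ of $A$, and this is never positive definite --- the constant first coordinate gives a zero eigenvalue, reflecting the invariance of $\ell_u$ under rescaling $\theta_0$ (the fibres of $\psi_{A,c}$ are $\CC^*$). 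So $\ell_u(\eta)$ is not strictly concave on all of $\RR^{d+1}$; strict concavity holds only modulo this scaling direction (equivalently, on the slice $p_+=1$), where the relevant Hessian is the covariance of the $a_i$ alone, positive definite because the $a_i$ affinely span $\RR^d$ by the full-rank assumption. Since rescaling $\theta_0$ does not change the point $\varphi(p)$, this still yields uniqueness of $\hat p$, but the argument should be phrased on the quotient. A second small gloss: your existence step needs that a point of the closure of $\varphi(X_{A,c})\cap\Delta^\circ_{n-1}$ with all coordinates positive actually lies in $\M_{A,c}$; this is true under the stated lattice assumptions but is exactly the nontrivial content of the moment-map statement you invoke at the end, so it deserves a citation rather than being treated as automatic.
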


Moreover, the system \eqref{eq:birch} precisely describes all (complex) critical points of the log-likelihood function on $X_{A,c}$. Since we want the logarithm in \eqref{eq:likelihoodFunction} to be well-defined, we only look for critical points away from the \emph{distinguished hyperplane arrangement} $\cH = \V(p_1\dots p_np_+)$. This motivates the following definition, providing a critical point count.

\begin{definition}
    The \emph{maximum likelihood (ML) degree} $\mldeg(X_{A,c})$ of a scaled toric variety $X_{A,c}$ is the number of complex solutions for generic $u\in \CC^n$ to the system of equations
    \begin{equation}
        \label{eq:birchCritical}
        Ap = (u_+)^{-1}Au \quad\text{and}\quad p\in \varphi(X_{A,c}\setminus \cH).
    \end{equation}
\end{definition}

Since the system \eqref{eq:birchCritical} describes the intersection of an affine-linear space with the variety $X_{A,c}$, we immediately obtain the bound $\mldeg(X_{A,c}) \leq \deg(X_{A,c})$. By Kouchnirenko's Theorem, the degree is simply the normalized volume of the polytope $\conv(A)$. For generic scalings $c \in (\CC^*)^n$, this bound is attained \cite[Theorem 13]{amendola2019maximum}.

\section{Monotonicity of Toric ML Degree}
\label{sec:monotonicity}

For generic data, the ML degree monotonicity for facial submodels follows from well-known results in Ehrhart theory, in particular Stanley's monotonicity theorem \cite{AMonotonicityPropertyOfHVectorsAndHvectors}. More precisely, the ML degree for generic scalings equals the degree of the toric variety, which in turn is given by the normalized volume of $\conv(A)$. The normalized volume can be written as the sum of the $h^*$-vector components, and these are nonnegative and monotonic when considering subpolytopes. In this section, we prove our main theorem for all scalings, including non-generic ones. An important ingredient is the parameter continuation theorem \cite{CoefficientParameterPolynomialContinuation}.

\begin{theorem}[Parameter Continuation Theorem] \label{thm:parameter-continuation}
Let $F(\theta;u) : \CC^{d+1}\times\CC^n\to\CC^{d+1}$ be a square system of polynomials in $d+1$ variables and $n$ parameters. Furthermore, let $\cN(u)$ denote the number of isolated solutions, and let $\gamma(t):[0,1]\to\CC^n$ be a continuous path.
\begin{itemize}
\item[\textup{(i)}] $\cN(u)$ is finite and it is the same, say $\cN$, for almost all $u\in\CC^n$.
\item[\textup{(ii)}] For all $u\in\CC^n$, $\cN(u) \le \cN$.
\item[\textup{(iii)}] The subset of $\CC^n$  where $\cN(u)=\cN$ is a Zariski open set, denoted $U$.
\item[\textup{(iv)}]  If $\gamma([0,1]) \subseteq U$, then the homotopy $F(\theta,\gamma(t))$ defines $\cN$ continuous, isolated solution paths $\theta(t)\in\CC^{d+1}$.
\item[\textup{(v)}] If $\gamma((0,1]) \subseteq U$, then as $t \to 0$, the limits of the solution paths, if they exist, include all the isolated solutions to $F (\theta, \gamma(0)) = 0$.
\end{itemize}
\end{theorem}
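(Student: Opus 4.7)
The plan is to work with the incidence variety
\[
    \V_F = \{(\theta, u) \in \CC^{d+1} \times \CC^n : F(\theta, u) = 0\}
\]
and its second projection $\pi \colon \V_F \to \CC^n$, reading off items (i)--(v) from a dimension/discriminant analysis together with the implicit function theorem.

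First I would decompose $\V_F$ into irreducible components and split them into the \emph{dominant} ones (those whose image under $\pi$ is Zariski dense in $\CC^n$) and the \emph{non-dominant} ones. Because $F$ is a square system in $d+1$ unknowns, every dominant component has dimension exactly $n$, and the fiber dimension theorem forces the generic fiber of $\pi$ restricted to such a component to be zero-dimensional. Setting $\cN$ equal to the cardinality of the generic fiber proves (i). For (iii), the discriminant locus $\Delta \subset \CC^n$ is the union of the images of the non-dominant components together with the branch locus of $\pi$ on the dominant components, i.e.\ the zero set of the resultant eliminating $\theta$ from $F$ and $\det(\partial F / \partial \theta)$. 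Both ingredients are proper Zariski-closed subsets of $\CC^n$, so $U = \CC^n \setminus \Delta$ is Zariski open, and $\cN(u) = \cN$ on $U$.

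For (ii), I would invoke upper semi-continuity of local intersection multiplicity: each isolated zero $\theta^*$ of $F(\cdot, u_0)$ carries a positive integer multiplicity, and the sum of these over all isolated zeros of $F(\cdot, u_0)$ is at most $\cN$, with the deficit absorbed by positive-dimensional components of $F^{-1}(u_0)$ or by solutions escaping to infinity as one approaches $u_0$. In particular $\cN(u_0) \leq \cN$. Part (iv) is then a direct application of the implicit function theorem: on $U$ the Jacobian $\partial F / \partial \theta$ is invertible at each of the $\cN$ solutions above $\gamma(t_0)$, so the solutions extend uniquely to analytic local branches, and by compactness of $[0,1]$ together with finiteness of the fibers along $\gamma$ these local branches glue to $\cN$ global continuous paths.

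The main obstacle is (v). Fix an isolated solution $\theta^*$ of $F(\theta, \gamma(0)) = 0$ and choose a closed ball $\overline{B}$ so small that $\theta^*$ is the unique zero of $F(\cdot, \gamma(0))$ inside $\overline{B}$. Let $m \geq 1$ be the local topological degree of $F(\cdot, \gamma(0))$ at $\theta^*$. By uniform continuity of $F$ on the compact set $\overline{B} \times \gamma([0,1])$, the restrictions $F(\cdot, \gamma(t))|_{\partial B}$ stay bounded away from zero for all sufficiently small $t > 0$, so the local degree is constant in $t$ and $F(\cdot, \gamma(t)) = 0$ has exactly $m$ zeros in $B$ counted with multiplicity. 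For such $t$ these zeros are smooth points of $U$ and therefore lie on $m$ of the $\cN$ solution paths provided by (iv); shrinking $\overline{B}$ toward $\{\theta^*\}$ forces at least one of these paths to converge to $\theta^*$ as $t \to 0$, proving (v). The delicate point here is promoting the classical invariance-of-degree argument to a merely continuous family of parameters, which is precisely where the uniform-continuity step on $\partial B \times [0,\varepsilon]$ is indispensable.
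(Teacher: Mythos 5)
The paper itself contains no proof of this theorem: it is quoted from Morgan--Sommese's work on coefficient--parameter continuation, and the text explicitly defers the proof to that reference. So the only comparison available is between your sketch and the standard argument, which you do follow in outline (incidence variety $\V_F$, generic fibers of $\pi$, invariance of the local degree for (ii) and (v)). Within that outline there are two genuine gaps. First, the claim that every dominant component of $\V_F$ has dimension exactly $n$ is false: Krull's height theorem gives dimension \emph{at least} $n$ for every component of a system of $d+1$ equations in $\CC^{d+1}\times\CC^n$, but a dominant component can have dimension strictly greater than $n$ (e.g.\ if some equations are dependent or vanish identically). Such excess components contribute no isolated solutions over generic $u$, so $\cN$ must be defined as the generic number of fiber points of the $n$-dimensional dominant components that avoid all other components, and your argument for (ii) has to be checked against isolated zeros over special $u$ that may interact with these excess components.

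Second, and more seriously, your discriminant $\Delta$ (images of non-dominant components plus the branch locus) omits the locus over which solutions escape to infinity, i.e.\ the set of parameters over which $\pi$ fails to be proper. Take $d+1=n=1$ and $F(\theta;u)=u\theta-1$: then $\V_F$ is a smooth hyperbola, $\pi$ is everywhere unramified, there are no non-dominant components, so your $\Delta$ is empty, yet $\cN(0)=0<1=\cN$. One must enlarge $\Delta$ by the closure of the non-properness (Jelonek) locus, detected for instance by compactifying the fibers in $\PP^{d+1}$ and projecting the solutions at infinity; this locus is exactly what matters in the paper's application, where critical points do run off to the hyperplane arrangement as the data degenerates. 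The same omission infects (iv): finiteness of fibers along $\gamma$ does not by itself prevent a local implicit-function branch from blowing up at an interior time; one needs the exact count $\cN(\gamma(t))=\cN$ on both sides of a putative blow-up time to get a contradiction. Your degree-theoretic argument for (v) is the right idea; just note that the pigeonhole step produces a path that a priori only \emph{accumulates} at $\theta^*$, and upgrading this to an actual limit requires the additional observation that for every small $r$ and all sufficiently small $t$ no zero of $F(\cdot,\gamma(t))$ lies in $\overline{B}\setminus B_r$, so a path that re-enters $B_r$ can never again leave $\overline{B}$.
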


A proof can be found in \cite{CoefficientParameterPolynomialContinuation}. We also refer to \cite{NumericalNonlinearAlgebra} and \cite[Chapter 7]{SommeseWampler} for more details on parameter homotopy continuation. As a well-known consequence, considering a square system, the number of isolated solutions can only decrease when deforming generic parameters to non-generic ones. Our main result is a powerful application of this fact.

\begin{theorem}[Monotonicity of toric ML degree]
    \label{thm:main}
    Let $X_{A,c}$ be a scaled toric variety and let $F$ be a face of $\conv(A)$. Then the ML degree of the facial submodel $X_{A_F,c_F}$ cannot increase,
    \begin{equation}
        \label{eq:monotonicity_ML_deg}
        \mldeg(X_{A_F,c_F}) \leq \mldeg(X_{A,c}).
    \end{equation}
\end{theorem}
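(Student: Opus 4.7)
The plan is to apply the Parameter Continuation Theorem (\Cref{thm:parameter-continuation}) to the critical equations of the log-likelihood $\ell_u$ on $X_{A,c}$, with the data vector $u\in\CC^n$ playing the role of the varying parameter. Substituting the parametrization $p_i=c_i\theta^{a_i}$ into Birch's system and clearing denominators yields a square polynomial system $F(\theta;u)=0$ whose coefficients depend linearly on $u$. For generic $u$, the isolated solutions of $F(\theta;u)=0$ are exactly the critical points of $\ell_u$ in the open part $X_{A,c}\setminus\cH$, so the generic count equals $\mldeg(X_{A,c})$; by part (ii) of \Cref{thm:parameter-continuation}, the number of isolated solutions of $F(\theta;u^*)=0$ at any specialization $u^*$ is then at most $\mldeg(X_{A,c})$. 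The whole argument reduces to exhibiting a specific $u^*$ at which this count is at least $\mldeg(X_{A_F,c_F})$.

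The specialization I would use is $u^*\in\CC^n$ with $u^*_i=0$ for $i\notin F$ and $u^*_F$ chosen generic on the face. Because $F$ is a face of $\conv(A)$, there is a primitive integer cocharacter $w\in\ZZ^d$ whose inner products $\langle w,a_i\rangle$ attain their minimum exactly on the columns $a_i$ in $F$. Choosing a basis of $\ZZ^d$ compatible with a splitting of $\ZZ^d$ into the face lattice and a complement induces a monomial change of coordinates on $(\CC^*)^d$ in which the facial submodel $X_{A_F,c_F}$ corresponds to the boundary stratum obtained by setting the last $d-\dim F$ coordinates to zero. Restricted to this stratum, the ambient parametrization of $X_{A,c}$ reduces to the parametrization of $X_{A_F,c_F}$; moreover, since $u^*_i=0$ for $i\notin F$, the contributions from monomials outside $F$ drop out, and the ambient critical system $F(\theta;u^*)=0$ restricts exactly to the facial Birch system with data $u^*_F$. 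By genericity of $u^*_F$, the latter has $\mldeg(X_{A_F,c_F})$ distinct isolated solutions in the facial torus.

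Each such facial critical point $\hat\psi$ lifts to a candidate boundary solution $\hat\theta\in\CC^d$ of $F(\theta;u^*)=0$ by appending zeros in the transverse coordinates. The step I expect to be the main obstacle is verifying that these lifts are actually isolated solutions of $F(\theta;u^*)=0$ in $\CC^d$, rather than points lying on a positive-dimensional component of the (possibly degenerate) solution set at $u^*$. This reduces to a local Jacobian computation at $\hat\theta$: the facial block of the Jacobian is invertible because $\hat\psi$ is an isolated critical point of the facial likelihood (by genericity of $u^*_F$), while the transverse block -- the partial derivatives with respect to the $d-\dim F$ coordinates vanishing on the face -- must also be shown invertible. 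The structure of the monomials $c_i\theta^{a_i}$ with $i\notin F$, which degenerate precisely in these transverse directions, together with the genericity of $u^*_F$, should deliver the required nondegeneracy.

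Combining the ingredients, \Cref{thm:parameter-continuation}(ii) bounds the number of isolated solutions of $F(\theta;u^*)=0$ in $\CC^d$ above by $\mldeg(X_{A,c})$, while the lifting construction produces at least $\mldeg(X_{A_F,c_F})$ such isolated solutions, yielding the desired monotonicity $\mldeg(X_{A_F,c_F})\leq\mldeg(X_{A,c})$.
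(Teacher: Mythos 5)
Your overall strategy is exactly the paper's: set the data to zero outside the face, observe that facial critical points lift to boundary solutions of the ambient likelihood system, prove those lifts are isolated, and invoke \Cref{thm:parameter-continuation}(ii) to bound the isolated-solution count at the special data by the generic count $\mldeg(X_{A,c})$. You also correctly flag the one step that carries all the difficulty. The problem is that your proposed resolution of that step fails: the ``transverse block'' of the Jacobian of the full likelihood system at the lifted point is \emph{not} invertible in general. Write the last likelihood equation (for a facet, say, in the paper's normalization) as $\theta_0\theta_d\partial_{\theta_d}f = \theta_0\,\theta_d^{\alpha+1}g$ where $\alpha+1$ is the minimal last coordinate among the non-facial columns of $A$ and $g=\theta_d^{-\alpha}\partial_{\theta_d}f$. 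Whenever $\alpha\geq 1$ (which happens as soon as every non-facial lattice point in the configuration sits at lattice distance at least two from the facet), this equation vanishes to order $\alpha+1\geq 2$ along $\theta_d=0$, so its entire gradient vanishes at the lifted point and the Jacobian of the ambient system is singular there. The lifted solutions are then singular points of $\V(\cL_A(\theta;\tilde u))$, and no direct Jacobian computation can certify that they are isolated.

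The paper gets around this in two ways you would need to add. First, it reduces by induction on dimension to the case where $F$ is a facet, so there is only one transverse coordinate $\theta_d$ to control; for a face of higher codimension your transverse block is a genuinely multivariate object and the degeneration is worse. Second, and more importantly, it decomposes $\V(\cL_A(\theta;\tilde u))$ according to the factorization $\theta_0\theta_d^{\alpha+1}g = \theta_d^{\alpha+1}\cdot(\theta_0\theta_d g)$ into the union $\V(\cL_1)\cup\V(\cL_2)$, where $\cL_1$ replaces the last equation by $\theta_d^{\alpha+1}$ and $\cL_2$ replaces it by $\theta_0\theta_d g$. On the first piece the lifted points are isolated because the facial solutions are; on the second piece a Jacobian argument does work, because $\partial_{\theta_d}(\theta_d g)\vert_{\theta_d=0}$ is a nonzero polynomial in $\theta_1,\dots,\theta_{d-1}$, and a dimension count ($\dim X_{A_F,c_F}=d$ versus the $(d-1)$-dimensional vanishing locus of that polynomial) lets one choose $u_F$ generically so that no facial critical point lies on it. Isolatedness in each piece then gives isolatedness in the union. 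Without this decomposition (or some equivalent local argument, e.g.\ via multiplicities), your proof has a genuine gap precisely at the step you identified as the main obstacle.
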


\begin{proof}
We prove this by induction on the dimension of $X_{A,c}$. If $X_{A,c}$ is one-dimensional, all proper faces are vertices. The corresponding toric variety that defines a facial submodel is a single point which has ML degree one, so \eqref{eq:monotonicity_ML_deg} is trivially satisfied.
    
Now assume $\dim(X_{A,c}) = d > 1$. By induction, it suffices to assume that $F$ is a facet of $\conv(A)$. We apply an affine unimodular transformation so that $\conv(A)$ lies in the nonnegative orthant and $F$ lies in the hyperplane where the last coordinate is zero. Since affine unimodular transformations induce isomorphisms of very affine varieties, this leaves the ML degree unchanged, see \cite[\S 5 Proposition 1.2 \& \S 9 Proposition 1.4]{DiscriminantsResultantsAndMultidimensionalDeterminants}. Hence, upon reordering columns, we may assume that $A$ takes the form 
\[
	A = \begin{pmatrix} A_F & \star & \ldots & \star \\ 0 & a_{d (k+1)} & \ldots & a_{dn} \end{pmatrix} \in \mathbb{Z}^{(d+1)\times n},
\]
where $A_F$ is the $d\times k$ matrix corresponding to the facet $F$ (so $F$ contains $k$ lattice points). Setting $f = \sum_{i=1}^n c_i \theta^{a_i}$, by \cite[Definition 6]{amendola2019maximum}, the likelihood equations become
\[
	\cL_A(\theta;u) \coloneqq \left\{ \theta_0 f-1,\, \theta_0 \theta_1\partial_{\theta_1}f - u_+^{-1}(Au)_2,\, \ldots\, ,\, \theta_0 \theta_{d}\partial_{\theta_{d}}f - u_+^{-1}(Au)_{d+1} \right\}.
\]
The likelihood equations $\cL_{A_F}$ for the facial submodel defined by $X_{A_F,c_F}$ are obtained by replacing $A$ with the submatrix $A_F$. Note that for generic data $u=(u_1,\ldots,u_n)$ we may assume that $u_i\neq0$ for all $i=1,\ldots,n$. It follows that the corresponding critical points $p=(p_1,\ldots,p_n)$ of \eqref{eq:likelihoodFunction} have non-zero coordinates, see e.g.\ \cite[Proposition 4]{MaximumLikelihoodGeometryInThePresenceOfDataZeros}. Additionally, we intersect $X_{A,c}$ with the hyperplane where $p_1+\ldots+p_n=1$. Therefore, for generic $u$, we have $p \notin \cH$ and $\V(\cL_A(\theta;u))$ contains $\mldeg(X_{A,c})$ non-singular points. 

Now let $\alpha \coloneqq \min \{ a_{d(k+1)}, \ldots, a_{dn} \}-1\ge0$. We set $g \coloneqq \theta_d^{-\alpha}\partial_{\theta_d}f \in \CC[\theta_1, \ldots, \theta_d]$, and let $u_F = (u_1,\dots,u_k)$ be generic data for the facial subsystem. By construction of $g$, $\partial_{\theta_d}\theta_dg\vert_{\theta_d=0}$ is a nonzero polynomial in $\CC[\theta_1, \ldots, \theta_{d-1}]$.
% that does not depend on $u_F$. 
Its vanishing set has dimension $d-1$ while $\dim(X_{A_F,c_F})=d$. Hence, for generic $u_F$ we may assume that every solution $\hat\theta_F$ to $\cL_{A_F}(\theta_0,\dots,\theta_{d-1};u_F)=0$ is not in the vanishing set of $\partial_{\theta_d}\theta_dg\vert_{\theta_d=0}$. For such~$u_F$, $\V(\cL_{A_F}(\theta_0,\dots,\theta_{d-1};u_F))$ consists of $\mldeg(X_{A_F,c_F})$ many non-singular isolated points.

We extend the data vector $u_F$ by $n-k$ data zeros and set $\Tilde{u}=(u_F,0,\ldots,0)$. Any solution $\hat{\theta}_F \in \V(\cL_{A_F}(\theta_0,\dots,\theta_{d-1};u_F))$ lifts to a solution $\hat{\theta} = (\hat{\theta}_F, 0) \in \V(\cL_A(\theta;\Tilde{u}))$. It remains to show that $\hat{\theta}$ is an isolated point in $\V(\cL_A(\theta;\Tilde{u}))$. Then, by Theorem \ref{thm:parameter-continuation}, the number of isolated solutions can only decrease when specializing the data. Hence $\mldeg(X_{A_F,c_F}) \le \#\V(\cL_A(\theta;\Tilde{u})) \le \mldeg(X_{A,c})$, where $\#$ denotes the number of isolated points.

We decompose $\V(\cL_A(\theta;\Tilde{u}))$ into two sets $\V(\cL_1(\theta;\Tilde{u}))\cup\V(\cL_2(\theta;\Tilde{u}))$, where
\begin{align*}
\cL_1(\theta;\Tilde{u}) &\coloneqq \left \{ \theta_0 f-1,\, \theta_0 \theta_1\partial_{\theta_1}f - \tilde u_+^{-1}(A\tilde u)_2,\, \ldots\, ,\, \theta_0 \theta_{d-1}\partial_{\theta_{d-1}}f - \tilde u_+^{-1}(A\tilde u)_{d}, \theta_d^{\alpha+1} \right \}, \\
\cL_2(\theta;\Tilde{u}) &\coloneqq \left \{ \theta_0 f-1,\, \theta_0 \theta_1\partial_{\theta_1}f - \tilde u_+^{-1}(A\tilde u)_2,\, \ldots\, ,\, \theta_0 \theta_{d-1}\partial_{\theta_{d-1}}f - \tilde u_+^{-1}(A\tilde u)_{d}, \theta_0\theta_d g \right \}.
\end{align*}
Each point in $\V(\cL_1(\theta;\Tilde{u}))$ is a lifted point of an element in $\V(\cL_{A_F}(\theta_0,\dots,\theta_{d-1};u_F))$, i.e.\ $\hat{\theta}$ is isolated in $\V(\cL_1(\theta;\Tilde{u}))$. To show that $\hat{\theta}$ is also isolated in $\V(\cL_2(\theta;\Tilde{u}))$ we  compute the Jacobian $J_{\cL_2}(\theta)$ of $\cL_2(\theta;\Tilde{u})$. Considering the form of $A$, it follows that
\[
J_{\cL_2}(\theta)\vert_{\theta_d=0} = \begin{pmatrix} J_{\cL_{A_F}} & \star \\ 0 & \theta_0 \partial_{\theta_d}\theta_dg\vert_{\theta_d=0} \end{pmatrix},
\]
where $J_{\cL_{A_F}}$ is the Jacobian of $\cL_{A_F}(\theta_0,\dots,\theta_{d-1};u_F)$. With the choice of generic $u_F$ we obtain that both $J_{\cL_{A_F}}(\hat \theta_F)$ is full rank and $\theta_0 \partial_{\theta_d}\theta_dg\vert_{\theta_d=0} \neq 0$ for all $\hat\theta_F \in \V(\cL_{A_F}(\theta_0,\dots,\theta_{d-1};u_F))$. Thus, $\hat\theta$ is isolated in both $\V(\cL_1(\theta;\Tilde{u}))$ and $\V(\cL_2(\theta;\Tilde{u}))$ and therefore in $\V(\cL_A(\theta;\Tilde{u}))$.
\end{proof}

The following example shows that the ML degree monotonicity generally fails when considering arbitrary submodels that no longer correspond to faces of the original polytope.

\begin{example}
	Consider the square with side length two, located in the positive orthant with one vertex being the origin. Its lattice points define the following design matrix: $$A = \begin{pmatrix} 1 & 1 & 1 & 1 & 1 & 1 & 1 & 1 & 1 \\ 0 & 1 & 2 & 0 & 1 & 2 & 0 & 1 & 2 \\ 0 & 0 & 0 & 1 & 1 & 1 & 2 & 2 & 2 \end{pmatrix} \in \ZZ^{3\times9}.$$ 
    According to \cite[Theorem 5.9]{LikelihoodGeometryOfReflexivePolytopes}, we have $\mldeg(X_{A,c})=1$ for $c=(1,2,1,2,4,2,1,2,1)$. Denote $A'$ the submatrix of $A$ obtained by deleting the last column, and $c'$ the corresponding scaling. Then $\mldeg(X_{A',c'})=3$. Here, both $X_{A,c}$ and $X_{A',c'}$ have dimension two. Considering the pyramid over the square defines a toric variety of dimension three and ML degree one. The pyramid with corresponding ML degree one scaling is represented in Figure \ref{fig:polytopes} (b).
\end{example}

\section{Data Zeros}
\label{sec:data_zeros}

While maximum likelihood estimation is well understood with respect to generic data, the likelihood equations for non-generic data are largely unexplored. In particular, we often observe random incidents in which certain events do not occur \cite{PerformingTheExactTestOfHardyWeinbergProportionForMultipleAlleles}. In such cases, statisticians deal with discrete data sets that contain zero frequencies, and maximum likelihood estimation is performed for non-generic data. From an algebraic perspective, the occurrence of data zeros in the setting of toric models leads to interesting questions concerning the exceptional set of a parametrized polynomial system. For instance, continuous deformations of the data imply continuous deformations of the critical points \cite{TheMaximumLikelihoodDataSingularLocus}. When deforming generic data to specific data, the behavior of the critical points depends essentially on the choice of the specific data. For most choices, the corresponding critical points are distinct and regular as in the case of generic data. However, deformed critical points may coincide or lie in the singular locus. The data locus for which the critical points are singular was studied as the \emph{maximum likelihood data singular locus} by Horobe\textcommabelow{t} and Rodriguez \cite{TheMaximumLikelihoodDataSingularLocus}. Other references explicitly deal with the case of data zeros under the simplified assumption of sampling and model zeros \cite{MaximumLikelihoodGeometryInThePresenceOfDataZeros}. In particular, similar ML degree monotonicity statements are known for model zero varieties \cite[Corollary 12]{MaximumLikelihoodGeometryInThePresenceOfDataZeros}. An important observation is that occurring data zeros can cause deformed critical points that are singular but still isolated. However, data zeros outside of the support of a face can imply non-isolated critical points.

\begin{example}[Independence Model] \label{example:independenceMN3} \upshape
    We consider the independence model of two random variables  with state space $\{1,2,3\}$. After a unimodular transformation, the design matrix is
    $$
    A = \begin{pmatrix}
        1 & 1 & 1 & 1 & 1 & 1 & 1 & 1 & 1 \\
        1 & 1 & 1 & 0 & 0 & 0 & 0 & 0 & 0 \\
        0 & 0 & 0 & 1 & 1 & 1 & 0 & 0 & 0 \\
        1 & 0 & 0 & 1 & 0 & 0 & 1 & 0 & 0 \\
        0 & 1 & 0 & 0 & 1 & 0 & 0 & 1 & 0 \\
    \end{pmatrix}.
    $$
    While $\deg(X_A) = 6$, all ML degrees between one and six can be achieved by choosing appropriate scalings \cite{amendola2019maximum, MatroidStratificationOfMLDegreesOfIndependenceModels}. Examples of such scalings are shown in Table \ref{table:ScalingsMN3}. We denote these scaling matrices by $C_1, \ldots, C_6$, labeled by their corresponding ML degree. For data $(u_1, u_2, u_3, u_4, 0,0,0,0,0)$ and $(0,0,0,u_4,u_5,u_6,u_7,0,0)$, consider all possible ways of introducing further zeros into these data vectors, as shown in Table~\ref{table:DataZerosMN3}. For such data and the six scalings $C_1,\dots,C_6$, we computed the number of solutions to the likelihood equations symbolically using \texttt{Mathematica} and recorded them in Table~\ref{table:DataZerosMN3}.
\end{example}

\begin{table}
\centering
\begin{tabular}{ccccccc} \toprule
ML Degree  & 1 & 2 & 3 & 4 & 5 & 6 \\
\midrule
Scaling  & $\begin{bmatrix} 1 & 1 & 1 \\ 1 & 1 & 1 \\ 1 & 1 & 1 \end{bmatrix}$ & $\begin{bmatrix} 1 & 1 & 1 \\ 1 & 1 & 2 \\ 1 & 1 & 2 \end{bmatrix}$ & $\begin{bmatrix} 1 & 1 & 1 \\ 1 & 2 & 3 \\ 1 & 2 & 3 \end{bmatrix}$ & $\begin{bmatrix} 1 & 1 & 1 \\ 1 & 2 & 3 \\ 1 & 2 & 1 \end{bmatrix}$ & $\begin{bmatrix} 1 & 1 & 1 \\ 1 & 2 & 3 \\ 1 & 3 & 5 \end{bmatrix}$ & $\begin{bmatrix} 1 & 1 & 1 \\ 1 & 2 & 3 \\ 2 & 3 & 1 \end{bmatrix}$ \\
\bottomrule
\end{tabular}
\caption{Scalings for the model considered in Example \ref{example:independenceMN3} that cover all possible ML degrees.} \label{table:ScalingsMN3}
\end{table}

The above example illustrates that certain combinations of data zeros can lead to a non-transverse intersection of the scaled toric variety and the linear subspace $u+\ker(A)$. For other combinations, the number of solutions coincides with the ML degree, although we can only observe this behavior for the generic scaling. In principle, the number of isolated solutions can decrease. Since we did not find any reference for it, we record the following folklore statement as an immediate consequence of Theorem \ref{thm:parameter-continuation}.

\begin{corollary}
    Let $u$ be any data and $c \in (\CC^*)^n$. If $X_{A,c}$ and $u+\ker(A)$ intersect transversally, the number of complex solutions to the likelihood equations is at most $\mldeg(X_{A,c})$.
\end{corollary}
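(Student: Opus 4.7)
The plan is to apply part (ii) of the Parameter Continuation Theorem (Theorem~\ref{thm:parameter-continuation}) to the likelihood equations directly. Regard the system $\cL_A(\theta;u)$ written down in the proof of Theorem~\ref{thm:main} as a square polynomial system $F(\theta;u)$ in $\theta\in\CC^{d+1}$ parametrized by the data vector $u\in\CC^n$. By definition of the ML degree, $\cN(u) = \mldeg(X_{A,c})$ for generic $u$, and part (ii) of Theorem~\ref{thm:parameter-continuation} then yields $\cN(u) \le \mldeg(X_{A,c})$ for \emph{every} $u\in\CC^n$. The content of the corollary is therefore to show that under the transversality hypothesis, the count of complex solutions to the likelihood equations is not larger than $\cN(u)$, i.e.\ every such solution is an isolated root of $F(\theta;u)$.

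To carry this out, I would identify the complex solutions of the likelihood equations with points of the scheme-theoretic intersection $X_{A,c}\cap(u+\ker(A))$ away from $\cH$, via the monomial parametrization $\psi_{A,c}$ of Section~\ref{sec:preliminaries}. Since $A$ has full rank and the fibres of $\psi_{A,c}$ are one-dimensional (precisely the $\CC^*$-action corresponding to the $\theta_0$ variable, which is fixed by the normalization $\theta_0 f = 1$), the map $\psi_{A,c}$ is a local isomorphism onto $X_{A,c}$ off the coordinate hyperplanes. Transversality of the intersection at a point $p$ means that $T_p X_{A,c}$ and $\ker(A)$ jointly span $\CC^n$; pulled back through $\psi_{A,c}$, this is exactly the statement that the Jacobian $J_F(\theta;u)$ is nonsingular at the corresponding $\theta$. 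Hence each solution is a simple, and in particular isolated, root of $F(\theta;u)$, so the total number of complex solutions equals $\cN(u)$ and the desired bound follows.

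The only real obstacle is the translation step in the second paragraph: matching the intrinsic geometric transversality of $X_{A,c}\cap(u+\ker(A))$ with the algebraic nondegeneracy of the Jacobian of the square system $\cL_A(\theta;u)$. This requires checking that the rows of $J_F$ encode, respectively, the defining equation $\theta_0 f=1$ of the normalizing hyperplane and the linear constraints coming from $u+\ker(A)$ expressed in the $\theta$-chart; once this is unwound, the identification is immediate from the chain rule applied to $\psi_{A,c}$, and the remainder of the argument is a direct invocation of Theorem~\ref{thm:parameter-continuation}.
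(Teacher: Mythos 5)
Your argument is correct and follows the same route the paper intends: the paper states this corollary without a written proof, calling it an immediate consequence of Theorem~\ref{thm:parameter-continuation}, and your reasoning---that part (ii) bounds the number of \emph{isolated} solutions by the generic count $\mldeg(X_{A,c})$, while transversality of $X_{A,c}\cap(u+\ker(A))$ guarantees every solution of the square system $\cL_A(\theta;u)$ is isolated (indeed simple), so all complex solutions are counted---is exactly the intended argument. The translation step you flag (matching geometric transversality with nonsingularity of the Jacobian of $\cL_A$ via the parametrization $\psi_{A,c}$) is the only point requiring care, and your chain-rule sketch handles it at the same level of detail as the paper itself.
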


We believe this to be a good starting point for further discussions. While the exceptional locus for scalings is described by the principal $A$-determinant, there is also initial work on discriminants of the likelihood function in the data, see \cite{LogarithmicDiscriminantsOfHyperplaneArrangements} for a treatment in the case of linear models. Major open questions concern the computation of discriminants that describe the exceptional parameter locus in terms of both the data and the scalings. 

\begin{table}[h!]
\centering
\begin{tabular}{ |P{4.5cm}||P{1.45cm}|P{1.45cm}|P{1.45cm}|P{1.45cm}|P{1.45cm}|P{1.45cm}| }
\hline
Data & \multicolumn{6}{|c|}{Number of solutions to the likelihood equations for scalings} \\
\hline
& $C_1$ & $C_2$ & $C_3$ & $C_4$ & $C_5$ & $C_6$ \\
\hline \hline
$(u_1, u_2, u_3, u_4,0,0,0,0,0)$ & 0 & 0 & 0 & 1 & 2 & 3 \\
$(u_1, u_2, u_3, 0,0,0,0,0,0)$ & 0 & $\infty$ & $\infty$ & 1 & 2 & 3 \\
$(u_1, u_2, 0, u_4,0,0,0,0,0)$ & 0 & 0 & 0 & 1 & 1 & 2 \\
$(u_1, 0, u_3, u_4,0,0,0,0,0)$ & 0 & 0 & 0 & 1 & 2 & 3 \\
$(0, u_2, u_3, u_4,0,0,0,0,0)$ & 0 & 0 & 0 & 1 & 2 & 3 \\
$(u_1, u_2, 0, 0,0,0,0,0,0)$ & 0 & $\infty$ & $\infty$ & 0 & 1 & 2 \\
$(u_1, 0, u_3, 0,0,0,0,0,0)$ & 0 & $\infty$ & $\infty$ & 1 & 2 & 3 \\
$(0, u_2, u_3, 0,0,0,0,0,0)$ & 0 & $\infty$ & $\infty$ & 1 & 2 & 3 \\
$(u_1, 0, 0, u_4,0,0,0,0,0)$ & 0 & 0 & 0 & 1 & 2 & 2 \\
$(0, u_2, 0, u_4,0,0,0,0,0)$ & 0 & 0 & 0 & 0 & 1 & 2 \\
$(0, 0, u_3, u_4,0,0,0,0,0)$ & 0 & 0 & 0 & 1 & 2 & 3 \\
$(u_1, 0, 0, 0,0,0,0,0,0)$ & 0 & $\infty$ & $\infty$ & 0 & 1 & 2 \\
$(0, u_2, 0, 0,0,0,0,0,0)$ & 0 & $\infty$ & $\infty$ & $\infty$ & 1 & 2 \\
$(0, 0, u_3, 0,0,0,0,0,0)$ & 0 & $\infty$ & $\infty$ & 1 & 2 & 3 \\
$(0, 0, 0, u_4,0,0,0,0,0)$ & 0 & 0 & 0 & $\infty$ & 1 & 2 \\
\hline \hline
$(0, 0, 0, u_4, u_5, u_6, u_7,0,0)$ & 1 & 2 & 3 & 4 & 5 & 6 \\
$(0, 0, 0, u_4, u_5, u_6, 0,0,0)$ & 0 & 0 & 0 & 1 & 2 & 3 \\
$(0, 0, 0, u_4, u_5, 0, u_7,0,0)$ & 0 & 1 & 1 & 2 & 2 & 3 \\
$(0, 0, 0, u_4, 0, u_6, u_7,0,0)$ & 1 & 2 & 3 & 4 & 5 & 6 \\
$(0, 0, 0, 0, u_5, u_6, u_7,0,0)$ & 1 & 2 & 3 & 4 & 5 & 6 \\
$(0, 0, 0, u_4, u_5, 0, 0,0,0)$ & 0 & 0 & 0 & 1 & 1 & 2 \\
$(0, 0, 0, u_4, 0, u_6, 0,0,0)$ & 0 & 0 & 0 & $\infty$ & 2 & 3 \\
$(0, 0, 0, 0, u_5, u_6, 0,0,0)$ & 0 & 0 & 0 & 1 & 2 & 3 \\
$(0, 0, 0, u_4, 0, 0, u_7,0,0)$ & 0 & 1 & 1 & 2 & 2 & 3 \\
$(0, 0, 0, 0, u_5, 0, u_7,0,0)$ & 0 & 1 & 1 & 2 & 2 & 3 \\
$(0, 0, 0, 0, 0, u_6, u_7,0,0)$ & 1 & 2 & 3 & 4 & 5 & 6 \\
$(0, 0, 0, u_4, 0, 0, 0,0,0)$ & 0 & 0 & 0 & $\infty$ & 1 & 2 \\
$(0, 0, 0, 0, u_5, 0, 0,0,0)$ & 0 & 0 & 0 & 1 & 1 & 2 \\
$(0, 0, 0, 0, 0, u_6, 0,0,0)$ & 0 & 0 & 0 & $\infty$ & 2 & 3 \\
$(0, 0, 0, 0, 0, 0, u_7,0,0)$ & 0 & 1 & 1 & 2 & 2 & 3 \\
\hline
\end{tabular}
\caption{Computational results for the independence model of two random variables in the presence of data zeros. For certain combinations of data zeros, the table presents the number of solutions to the likelihood equations for all scalings given in Table \ref{table:ScalingsMN3}. The symbolic data entries are assumed to be generic.} \label{table:DataZerosMN3}
\end{table}

\section{Tropical Likelihood Degenerations}
\label{sec:tropical}

In this section, we introduce coefficients from the field of Puiseux series $\puiseux{\CC}$ into the polynomial $f$ and the data $u$. This leads to \emph{tropical maximum likelihood estimation}. In the articles \cite{agostini2023likelihood} and \cite{ boniface2024tropical} this has been studied for Puiseux data. Here, we also modify the polynomial $f$ to achieve the desired degeneration behavior.\par 
For generic valuations, there are still ML degree many solutions to the likelihood equations, now elements in $\puiseux{\CC}^{d+1}$. Taking the coordinatewise $t$-adic valuation of such solutions $\Tilde{\theta}$ gives rational vectors $\mathrm{val}(\Tilde{\theta})\in \QQ^{d+1}$, the tropical likelihood solutions. For toric varieties with generic scalings $c$, these tropical solutions have been characterized in \cite{boniface2024tropical}. In the generic case, Birch's Theorem can be tropicalized, implying that the tropical likelihood solutions are the stable intersection of a tropicalized toric variety (an affine-linear space) with a tropical linear space. For non-generic scalings, this characterization is no longer valid. \par 

We modify both the likelihood equations and the data in such a way that substituting $t=1$ gives the original likelihood equations for the toric variety $X_{A,c}$, while substituting $t=0$ results in the likelihood equations of $X_{A_F,c_F}$. This is achieved as follows. Let
\[
    \hat{f} = \sum_{a\in A} c_a t^{\delta_F(a)w_a}\theta^a,\quad \text{where}\quad \delta_F(a) = \left\{\begin{array}{ll}
        1 & \text{if}\quad a\notin F\\
        0 & \text{otherwise},
    \end{array}\right.
\]
and $w_a > 0$ are arbitrary positive weights. Moreover, let 
\[
    \hat{u} = (\hat{u}_a)_{a\in A} \quad \text{with}\quad \hat{u}_a = \left\{\begin{array}{ll}
        u_at^{w^\prime_a} & \text{if}\quad a\notin F\\
        u_a & \text{otherwise},
    \end{array}\right.
\]
with $w^\prime_a>0$. Then we define the set of tropical likelihood equations $\hat{\mathcal{L}}_A$ as
\begin{equation}
    \label{eq:tropical_likelihood}
    \hat{\mathcal{L}}_A = \left\{\theta_0\hat{f}-1,~ \theta_0 \theta_i\partial_{\theta_i}\hat{f} - \hat{u}_+^{-1}(A\hat{u})_i\quad\text{for } i=1,\dots,d \right\} \subset \puiseux{\CC}[\theta_0^{\pm},\theta_1^{\pm},\dots,\theta_d^{\pm}].
\end{equation}
Therefore, taking the $t\rightarrow 0$ limit is a \emph{tropical likelihood degeneration} \cite[\S6,7]{agostini2023likelihood}, turning the original likelihood solutions into the likelihood solutions of the facial submodel. The latter correspond to the tropical likelihood solutions of our modified equations. Hence, the degeneration allows to track which solutions persist when restricting to a face. We explain how to do this in the following example and provide code in \texttt{Maple}. We emphasize that, unlike in \cite{agostini2023likelihood}, we also introduce Puiseux coefficients in $f$, and not just in the data.
\begin{example}
    We consider the Segre embedding corresponding to the polytope $\mathrm{conv}((0,0),(1,0),(0,1),(1,1))$ and choose $\mathrm{conv}((0,0),(1,0))$ as our distinguished face $F$. For the polynomial $\hat{f}$, we set $\hat{f} = 1 + \theta_1 + 3t\theta_2 + 7t^3\theta_1\theta_2 \in \puiseux{\CC}[\theta_1,\theta_2]$. As tropical data we choose $\hat{u} = (1,2,3t^2,4t^4)$. Computing Gröbner bases for $\hat{\mathcal{L}}_A$ with different term orders leads to univariate polynomials of degree two in the variables $\theta_0,\theta_1$ and $\theta_2$ with coefficients in $\puiseux{\CC}$. For example, for the lexicographic order $\theta_0 \succ \theta_2 \succ \theta_1$ one obtains the polynomial
    \[
        (21t^4 + 7t^2)\theta_1^2 + (9t^4 - 14t^2 + 3)\theta_1 - 12t^4 - 6 \in \puiseux{\CC}[\theta_1].
    \]
    This polynomial has two roots in $\puiseux{\CC}$ which can be found via the Newton--Puiseux algorithm. In \texttt{Maple} this is implemented with the command \texttt{puiseux} from the \texttt{algcurves} package. This leads to the two solutions
    \begin{equation*}
        2 - 30t^4 +140t^6 + \frac{830}{3}t^8 +\dots,\quad -\frac{3}{7}t^{-2} + \frac{9}{7} - \frac{78}{7}t^2 + \frac{444}{7}t^4 + \dots 
    \end{equation*}
    Similarly, we obtain eight solution candidates for $(\theta_0,\theta_1,\theta_2)$. By plugging them back into $\hat{\mathcal{L}}_A$ we verify that there are two solutions to the tropical likelihood equations, namely\\
    \scalebox{0.83}{\parbox{.5\linewidth}{\begin{align*}
        & \left( -\frac{7}{3}t^2 - \frac{91}{9}t^4 + \frac{203}{27}t^6 + \frac{3745}{81} t^8 +\dots ,\,\, -\frac{3}{7}t^{-2} +\frac{9}{7} -\frac{78}{7}t^2 + \frac{444}{7}t^4 +\dots ,\,\, -\frac{1}{21}t^{-3} - \frac{2}{9}t^{-1} -\frac{10}{21}t +\frac{10}{3}t^3 + \dots \right) \\
        & \left(\frac{1}{3} -\frac{1}{3}t^2 + \frac{29}{9}t^4 - \frac{55}{3}t^6 + \dots ,\,\, 2 - 30t^4 + 140t^6 + \frac{830}{3}t^8 + \dots ,\,\, t - \frac{10}{3}t^3 + \frac{50}{9}t^5 + \frac{2090}{27}t^7+\dots \right).
    \end{align*}}}\\
    This agrees with the ML degree of this model being two. In the limit $t\rightarrow 0$, the first solution diverges. However, the second solution converges to $(1/3,2,0)$, which gives the unique solution of the likelihood equations for the facial submodel, which has ML degree one.
\end{example}
If we were to know that \eqref{eq:tropical_likelihood} forms a tropical basis, this would give an alternative proof of Theorem \ref{thm:main} via tropical lifting. Unfortunately, we are not aware of an argument proving that \eqref{eq:tropical_likelihood} is a tropical basis. A na\"ive application of the sufficient (but not necessary) criterion in \cite[Theorem 4.6.18]{maclagan2015introduction} can fail, as can be seen in the following example.

\begin{example}
    We consider the two-dilated cube as in Example \ref{ex:2-dilated-cube} with $F$ being the bottom face. Disregarding the homogenization, this yields three polynomials $\hat{\mathcal{L}}_A = \{\hat{f}_1,\hat{f}_2,\hat{f}_3\} \subset \puiseux{\CC}[\theta_1^{\pm},\dots,\theta_d^{\pm}]$, obtained by taking Euler derivatives of $\hat{f}$. For the weights $w_a$ and $w^\prime_a$ we pick random integers between 1 and 1000. These valuations give rise to weights on the lattice points of the Cayley configuration of the Newton polytopes of $\hat{f}_1,\hat{f}_2$ and $\hat{f}_3$. This induces a regular subdivision of the Cayley polytope. Theorem 4.6.18 in \cite{maclagan2015introduction} states that if this subdivision is a triangulation, then $\hat{\mathcal{L}}_A$ is a tropical basis. Using the command \texttt{subdivision\_of\_points} we are able to compute this subdivision in \texttt{Oscar.jl} \cite{OSCAR}. Computing \texttt{maximal\_cells} we then observe that the subdivision is not a triangulation. Of course, this does not exclude the possibility that for a better (non-random) choice of weights $w_a$ and $w^\prime_a$ one could obtain a triangulation of the Cayley polytope.
\end{example}

It would be desirable to make the tropical degeneration argument sketched here rigorous by finding a nice tropical basis for $\hat{\mathcal{L}}_A$, for a suitable choice of weights. Besides providing an alternative proof of Theorem \ref{thm:main}, this would refine Theorem \ref{thm:main} by describing the precise asymptotic behavior of the critical points in the limit $t\rightarrow 0$. A related analysis has been performed in \cite{sattelberger2023maximum}, where the authors study \emph{critical slopes} for schön varieties.

\begin{problem}
    Determine, for a suitable choice of weights $w_a$ and $w_a^\prime$ as above, a tropical basis for the toric likelihood equations $\hat{\mathcal{L}}_A$ with Puiseux coefficients.
\end{problem}

The authors express hope that it might be possible to apply Esterov's theory on engineered complete intersections \cite{esterov2024engineered} to make progress on this problem.

\section{Applications}
\label{sec:applications}

Our monotonicity result for the ML degree strengthens known results in the literature that had only used monotonicity in the very restrictive case when the ML degree of the full model is one. In this section, we present this for graphical and quasi-independence models.

Let $G$ be an undirected graph. \emph{Discrete graphical models} associated to $G$ are important examples of log-affine models. The matrix $A$ that defines the log-affine model $\mathcal{M}_G$ is obtained from a monomial parametrization based on the \textit{maximal cliques} of the graph $G$. We refer the reader to \cite{geiger2006toric} for details. 

When the graph is \emph{chordal}, that is, every induced cycle of length at least four contains a chord, then the ML degree is known to be one. To prove the converse, one needs to argue that non-chordal graphs have ML degree strictly larger than one. An important fact is that the $m$-cycle when $m\geq 4$ is non-chordal and indeed has ML degree larger than one. Then, by definition, a non-chordal graph contains an induced chordless $m$-cycle for some $m\geq 4$. Thus, if we knew the ML degree were monotonic, we would conclude that any non-chordal graph has also ML degree larger than one. The result that submodels of ML degree one models must be of ML degree one themselves is implicitly assumed in \cite[Theorem 4.4]{geiger2006toric}, and proved rigorously using Horn pair representations in \cite[Theorem 3.2]{QuasiIndependenceModelsWithRationalMaximumLikelihoodEstimator}.

Nevertheless, the arguments above do not exclude the possibility of having an induced cycle as a subgraph of a non-chordal graph $G$, such that the ML degree associated to the cycle is higher than the ML degree of the full model $\mathcal{M}_G$. As an application of Theorem \ref{thm:main}, we see that this cannot be the case.

\begin{corollary}\label{cor:discretegraph}
    Let $\mathcal{M}_G$ be an undirected graphical model for discrete variables and $H$ an induced subgraph of $G$. Then 
    $$\mldeg(\mathcal{M}_H) \leq \mldeg(\mathcal{M}_G). $$
\end{corollary}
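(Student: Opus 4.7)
The plan is to realize $\M_H$ as a facial submodel of $\M_G$ so that \Cref{thm:main} applies directly. Write $V = V(G)$, $W = V(H)$, and $S = V \setminus W$. After fixing an arbitrary state vector $s$ for the variables at the vertices in $S$, I would single out the sub-matrix $A_F$ of $A_G$ consisting of those columns whose underlying joint state $x$ satisfies $x_S = s$. To verify that these columns are precisely the lattice points of a face of $\conv(A_G)$, I would exhibit the linear functional that, for each $v \in S$, sums the rows of $A_G$ indexed by pairs $(C_v, y_{C_v})$ with $y_v = s_v$, where $C_v$ is any fixed maximal clique of $G$ containing $v$. This functional evaluates to $|S|$ on the selected columns and strictly less on all other columns, so $A_F$ is the lattice-point matrix of a genuine face $F$ of $\conv(A_G)$.

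The next step is to identify $X_{A_F, c_F}$ with $X_{A_H, c_H}$ under the standard all-ones scaling. Rows of $A_F$ that become either identically zero or constant on all columns can be discarded without changing the ML degree, as this corresponds to an affine-unimodular change of the ambient lattice (compare the use of this in the proof of \Cref{thm:main}). The remaining rows are indexed, up to duplication, by pairs $(C \cap W, y_{C \cap W})$ where $C$ ranges over maximal cliques of $G$. Because $H = G[W]$ is an \emph{induced} subgraph, every maximal clique $K$ of $H$ arises as $C \cap W$ for some maximal clique $C$ of $G$ (extend $K$ to a maximal clique of $G$), and conversely every $C \cap W$ is a clique of $H$. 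Rows corresponding to non-maximal cliques of $H$ are non-negative integer combinations of rows for maximal cliques containing them, so the associated monomial parameters are redundant; hence $A_F$ and $A_H$ cut out the same scaled toric variety.

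Combining these two reductions, \Cref{thm:main} yields
\[
    \mldeg(\M_H) \;=\; \mldeg(X_{A_F, c_F}) \;\leq\; \mldeg(X_{A_G, c_G}) \;=\; \mldeg(\M_G),
\]
as desired.

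The main obstacle I foresee is the bookkeeping in the second step: one must check carefully that the log-affine structure obtained by restricting $A_G$ to the face $F$ matches the graphical model on $H$ as a \emph{scaled} toric variety, not merely set-theoretically. The induced-subgraph hypothesis is essential here — it guarantees that no edge of $H$ is ``missing'' from the clique structure obtained by intersecting maximal cliques of $G$ with $W$. For a non-induced subgraph, edges of $G$ with both endpoints in $W$ but absent from $H$ would contribute extra clique potentials to the facial submodel, breaking the identification entirely; this is consistent with the fact that no such monotonicity statement holds for arbitrary subgraphs.
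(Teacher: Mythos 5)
Your proof is correct and follows essentially the same route as the paper: realize $\M_H$ as a facial submodel of $\M_G$ and invoke \Cref{thm:main}. The only difference is that you prove the facial fact directly (via the supporting functional and the clique bookkeeping), whereas the paper simply cites \cite[Lemma A.2]{geiger2006toric} for it; your verification of that lemma is sound.
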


\begin{proof}
    The key fact is that the polytope associated to $\mathcal{M}_H$ is a face of the polytope associated to $\mathcal{M}_G$, by \cite[Lemma A.2]{geiger2006toric}. Then Theorem \ref{thm:main} applies. 
\end{proof}

\begin{example}[Binary 4-cycle] \label{ex:binary4cycle}
We study the binary 4-cycle model, see \cite[Example 4]{geiger2006toric}. Since each random variable can take two values, the state space has cardinality $2^4=16$.  A full-rank matrix that represents the parametrization of this model is given by
$$ 
A = \begin{pmatrix}
1 & 1 & 1 & 1 & 1 & 1 & 1 & 1 & 1 & 1 & 1 & 1 & 1 & 1 & 1 & 1 \\
0 & 1 & 0 & 0 & 1 & 0 & 0 & 1 & 0 & 1 & 0 & 0 & 1 & 1 & 1 & 1 \\
0 & 0 & 1 & 0 & 0 & 0 & 1 & 1 & 1 & 1 & 0 & 1 & 0 & 1 & 0 & 1 \\
0 & 0 & 0 & 1 & 1 & 0 & 0 & 0 & 1 & 1 & 1 & 1 & 0 & 0 & 1 & 1 \\
0 & 0 & 0 & 0 & 0 & 1 & 1 & 0 & 0 & 0 & 1 & 1 & 1 & 1 & 1 & 1 \\
0 & 0 & 0 & 0 & 0 & 0 & 0 & 1 & 0 & 1 & 0 & 0 & 0 & 1 & 0 & 1 \\
0 & 0 & 0 & 0 & 0 & 0 & 0 & 0 & 1 & 1 & 0 & 1 & 0 & 0 & 0 & 1 \\
0 & 0 & 0 & 0 & 0 & 0 & 0 & 0 & 0 & 0 & 1 & 1 & 0 & 0 & 1 & 1 \\
0 & 0 & 0 & 0 & 0 & 0 & 0 & 0 & 0 & 0 & 0 & 0 & 1 & 1 & 1 & 1
\end{pmatrix}.
$$
There are 16 facets with ML degree five and eight facets with ML degree one. We choose a particular flag and compute the ML degrees for each face, with the results shown in Table \ref{table:flag}. As expected from Theorem \ref{thm:main}, the sequence of ML degrees of each face in the flag is weakly increasing with the dimension.
\end{example}

\begin{table}[H]
    \centering
    \begin{tabular}{clcc}
        \toprule
        Dimension & Face & ML Degree & Degree \\
        \midrule
        8 & $\text{conv}(a_1, \ldots, a_{16})$ & 13 & 64 \\ 
	7 & $\text{conv}(a_1, \ldots, a_{12})$ & 5 & 15 \\ 
	6 & $\text{conv}(a_1, \ldots, a_{10})$ & 3 & 8 \\ 
	5 & $\text{conv}(a_1, \ldots, a_8)$ & 2 & 4 \\
	4 & $\text{conv}(a_1, \ldots, a_7)$ & 2 & 4 \\ 
	3 & $\text{conv}(a_1, \ldots, a_5)$ & 1 & 2 \\
	2 & $\text{conv}(a_1, a_2, a_3)$ & 1 & 1 \\
	1 & $\text{conv}(a_1, a_2)$ & 1 & 1 \\
	0 & $\text{conv}(a_1)$ & 1 & 1 \\
        \bottomrule
    \end{tabular}
    \caption{ML degrees of a flag of $\conv(A)$, where $A$ is the matrix considered in Example~\ref{ex:binary4cycle}.}\label{table:flag}
\end{table}

As another application, we consider \emph{quasi-independence models}, 
where the original inspiration of the conjecture by Coons and Sullivant originated from \cite{QuasiIndependenceModelsWithRationalMaximumLikelihoodEstimator}. These models capture the independence of two discrete variables $X$ and $Y$ with $m$ and $k$ states, respectively, but some combinations of states cannot occur. More concretely, the variables can only take values in a subset $S \subseteq [m] \times [k]$ (the rest of the pairs are known as \textit{structural zeros}), but otherwise they are independent. This means that we have a monomial parametrization
$$\psi^S: \CC^{m+k} \rightarrow \CC^S, \quad (s,t) \mapsto s_it_j$$
where the coordinates of $\CC^S$ are indexed by $S$. The corresponding toric model is denoted by $\mathcal{M}_S$. Furthermore, to each such set $S \subseteq [m] \times [k]$, one associates naturally a bipartite graph $G_S$ defined on independent sets $[m]$ and $[k]$ with an edge between $i$ and $j$ if and only if $(i,j)\in S$. The main result of Coons and Sullivant is the following.

\begin{theorem}\cite[Theorem 1.3]{QuasiIndependenceModelsWithRationalMaximumLikelihoodEstimator}
    Let $S \subseteq [m] \times [k]$, let $\mathcal{M}_S$ be the associated quasi-independence model and let $G_S$ be the bipartite graph associated to $S$. Then $\mathcal{M}_S$ has maximum likelihood degree one if and only if $G_S$ is doubly chordal bipartite.
\end{theorem}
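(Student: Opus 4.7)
The plan is to prove the two implications separately, with Theorem~\ref{thm:main} doing the main work in the forward direction (ML degree one implies $G_S$ doubly chordal bipartite). The strategy is by contrapositive. Suppose $G_S$ is not doubly chordal bipartite. By the excluded-induced-subgraph characterization of this graph class, $G_S$ contains an induced subgraph $H$ drawn from a finite list of minimal obstructions (chordless cycles $C_{2k}$ for $k \geq 3$ together with the additional small obstructions witnessing the ``doubly'' condition). Let $S' \subseteq S$ denote the edge set of $H$. A standard fact about edge polytopes of bipartite graphs is that the subpolytope indexed by the edges of an induced subgraph is a face of the edge polytope of the ambient bipartite graph, so $\mathcal{M}_{S'}$ arises as a facial submodel of $\mathcal{M}_S$. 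Theorem~\ref{thm:main} then yields $\mldeg(\mathcal{M}_{S'}) \leq \mldeg(\mathcal{M}_S)$. It remains to check, by direct computation for each obstruction $H$ on the finite list, that $\mldeg(\mathcal{M}_{S'}) \geq 2$, contradicting $\mldeg(\mathcal{M}_S) = 1$.

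For the converse direction, the plan is to construct an explicit rational MLE, which by a theorem of Huh is equivalent to ML degree one. I would induct on the number of vertices of $G_S$, using a doubly perfect elimination ordering---the structural feature characterizing doubly chordal bipartite graphs. At each step, remove a doubly simplicial vertex $v$; the reduced graph remains doubly chordal bipartite, so by induction the associated model $\mathcal{M}_{S \setminus v}$ admits a rational MLE. The claim to verify is that reinstating $v$ corresponds to a single rational update of the MLE formula, expressing the additional coordinates in terms of the marginal sums $(A_S u)_v$ at the removed vertex. Equivalently, one lifts a Horn pair representation for $\mathcal{M}_{S \setminus v}$ to one for $\mathcal{M}_S$ by a controlled extension dictated by the neighborhood of $v$.

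The hardest part will be the converse. The forward direction is essentially finite: once monotonicity is available, one needs only to compute ML degrees of a small, explicit list of obstruction graphs, which is mechanical via Gr\"obner bases. The inductive construction of a rational MLE, however, demands a careful analysis of how the elimination step transforms the Birch system $A_S\, p = u_+^{-1} A_S u$, together with a verification that the new unknowns can indeed be solved for rationally. Making this precise is the technical heart of the argument, and is likely where the Horn pair framework of Coons--Sullivant enters: the doubly perfect elimination ordering should manifest itself as a Horn pair with a particularly simple triangular structure, which is exactly what allows the rational solution to be propagated through the induction.
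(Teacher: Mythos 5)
First, be aware that the paper does not prove this statement: it is imported verbatim as \cite[Theorem~1.3]{QuasiIndependenceModelsWithRationalMaximumLikelihoodEstimator}, and the surrounding text only recalls how the `only if' direction of the original proof uses the fact that facial submodels of ML degree one models again have ML degree one. So there is no in-paper proof to compare against, and your proposal has to be measured against the Coons--Sullivant argument, whose shape the paper does describe. Your outline does follow that shape: reduce the `only if' direction to obstruction subgraphs via the face structure of the quasi-independence polytopes (your appeal to the full Theorem~\ref{thm:main} is overkill here; the weaker statement that ML degree one is inherited by facial submodels already suffices for this particular equivalence, which is exactly the point the paper makes), and prove the `if' direction by constructing a rational MLE, i.e.\ a Horn pair.

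There is, however, a concrete gap in your `only if' direction. A bipartite graph fails to be doubly chordal bipartite exactly when it contains a cycle of length at least six with at most one chord, and the resulting minimal induced obstructions include the chordless cycles $C_{2k}$ for \emph{every} $k \ge 3$: the graph $C_{10}$, say, is itself a minimal obstruction, since all of its proper induced subgraphs are forests. Your own parenthetical already lists this infinite family, yet the argument then claims the verification is ``essentially finite'' and ``mechanical via Gr\"obner bases.'' It is not: one needs a uniform proof that the quasi-independence model of the $2k$-cycle (and of the one-chord cycles) has ML degree at least two for all $k$, which is precisely the content of the computational lemmas in the original paper and cannot be replaced by finitely many Gr\"obner basis runs. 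The converse direction, as you acknowledge, is only a plan; the inductive lifting of a Horn pair through a doubly perfect elimination step is the technical heart of Coons--Sullivant, and nothing in the sketch verifies that the update at each step is actually rational. As written, the proposal is a correct road map whose two hardest segments remain unbuilt.
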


In order to prove the `only if' implication above, in their proof of \cite[Theorem 4.5]{QuasiIndependenceModelsWithRationalMaximumLikelihoodEstimator}, the authors use the weaker form of the statement that submodels of ML degree one models have themselves ML degree one \cite[Theorem 3.2]{QuasiIndependenceModelsWithRationalMaximumLikelihoodEstimator}.
The argument again leaves a priori the possibility of ML degree monotonicity being violated for quasi-independence models that do not have ML degree one. We rule this out thanks to Theorem \ref{thm:main}.

\begin{corollary}
    Let $T \subseteq S \subseteq [m] \times [k]$ such that $G_T$ is an induced subgraph of the bipartite graph $G_S$. Then the ML degrees of the associated quasi-independence models obey
    $$\mldeg(\mathcal{M}_T) \leq \mldeg(\mathcal{M}_S).$$
\end{corollary}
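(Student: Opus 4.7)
The plan is to reduce the corollary directly to Theorem \ref{thm:main} by realizing $\mathcal{M}_T$ as the facial submodel of $\mathcal{M}_S$ cut out by an appropriate face of the design polytope. Concretely, I would write $P_S = \conv\{e_i + f_j : (i,j)\in S\} \subseteq \RR^m \oplus \RR^k$, where $e_i$ and $f_j$ are the standard basis vectors arising from the monomial parametrization $\psi^S$. Since all monomials in $\psi^S$ have coefficient $1$, the scaling vector associated with $\mathcal{M}_T$ is automatically the restriction of that of $\mathcal{M}_S$, so the only real content is to exhibit $P_T$ as a face of $P_S$.

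To produce this face, I would exploit the induced subgraph hypothesis as follows. Let $W_1 \subseteq [m]$ and $W_2 \subseteq [k]$ denote the two parts of the vertex set of $G_T$ viewed inside $G_S$. Being an induced subgraph means every edge of $G_S$ with both endpoints in $W_1 \cup W_2$ already lies in $G_T$, which translates to the identity $T = S \cap (W_1 \times W_2)$. Then I would consider the linear functional
\[
    \ell(x,y) \,=\, \sum_{i \in [m]\setminus W_1} x_i \,+\, \sum_{j \in [k]\setminus W_2} y_j
\]
on $\RR^m \oplus \RR^k$. Each vertex $e_i + f_j$ of $P_S$ satisfies $\ell(e_i + f_j) \in \{0,1,2\}$, with $\ell(e_i+f_j) = 0$ if and only if $i \in W_1$ and $j \in W_2$, i.e.\ if and only if $(i,j) \in T$. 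Hence the face $F := \{z \in P_S : \ell(z) = 0\}$ has vertex set exactly the lattice points corresponding to $T$, giving $P_T = F$ and $\mathcal{M}_T = X_{A_F,(c_S)_F}$. Applying Theorem \ref{thm:main} to this face would yield the claimed inequality.

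There is no real obstacle here: the only substantive step is the identification $T = S \cap (W_1 \times W_2)$, which is immediate from the definition of an induced subgraph. In spirit, this argument parallels the proof of Corollary \ref{cor:discretegraph} for discrete graphical models, which invoked \cite[Lemma A.2]{geiger2006toric} to supply the facial structure; here, for bipartite incidence polytopes, the required supporting functional can simply be written down explicitly from the bipartition.
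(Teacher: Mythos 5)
Your proof is correct and follows the same route as the paper: both reduce the statement to Theorem \ref{thm:main} by identifying $\mathcal{M}_T$ with the facial submodel of $\mathcal{M}_S$ corresponding to the face $P_T$ of $P_S$. The only difference is that the paper cites \cite[Theorem 4.5]{QuasiIndependenceModelsWithRationalMaximumLikelihoodEstimator} for this facial correspondence, whereas you verify it directly by exhibiting the supporting functional $\ell$ --- a harmless and self-contained substitute.
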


\begin{proof}
    Again, the key fact is that the induced subgraph $G_T$ of $G_S$ corresponds to a facial submodel $\mathcal{M}_T$ of the toric model $\mathcal{M}_S$, see \cite[Theorem 4.5]{QuasiIndependenceModelsWithRationalMaximumLikelihoodEstimator}. Thus Theorem \ref{thm:main} applies. 
\end{proof}

\bigskip
\textbf{Acknowledgements.} We thank Alexander Esterov, Bernd Sturmfels and Simon Telen for helpful discussions.

\begin{small}
\setlength{\itemsep}{-0.4mm}
\bibliographystyle{alpha}
\bibliography{bib.bib}
\end{small}

\bigskip \bigskip

\noindent
\footnotesize {\bf Authors' addresses:}

\noindent Carlos Am\'endola, TU Berlin \hfill \href{mailto:amendola@math.tu-berlin.de}{\tt amendola@math.tu-berlin.de}

\noindent Janike Oldekop, TU Berlin \hfill \href{mailto:oldekop@math.tu-berlin.de}{\tt oldekop@math.tu-berlin.de}

\noindent Maximilian Wiesmann,
CSBD Dresden \hfill \href{mailto:wiesmann@pks.mpg.de}{\tt wiesmann@pks.mpg.de}

\end{document}